\newcommand\blfootnote[1]{%
  \begingroup
  \renewcommand\thefootnote{}\footnote{#1}%
  \addtocounter{footnote}{-1}%
  \endgroup}
\newtheorem{theorem}{Theorem}
\newtheorem{lemma}[theorem]{Lemma}
\newtheorem{proposition}[theorem]{Proposition}
\begin{document}

\title{A remark about Calder\'on-Hardy spaces with variable exponents}
\author{Pablo Rocha}

\maketitle

\begin{abstract}
In this note we improve the parameter $q$ that appears in Theorem 1 obtained by the author in [Math. Ineq. \& appl., Vol 19 (3) (2016), 
1013-1030].
\end{abstract}

\blfootnote{{\bf Keywords}: Variable Calder\'on-Hardy Spaces, Variable Hardy Spaces, Atomic decomposition. \\
{\bf 2020 Mathematics Subject Classification:} 42B25, 42B30}

\textbf{Notation and terminology:}

\

- The symbol $A\lesssim B$ stands for the inequality $A \leq cB$ for some positive constant $c$, and $A \approx B$ stands for 
$B \lesssim A \lesssim B$. \\

- We denote by $Q\left( x_0,r\right)$ the cube centered at $x_0 \in \mathbb{R}^{n}$ with side lenght $r$. Given a cube 
$Q = Q(x_0,r )$, we set $\delta Q = Q(x_0, \delta r)$. \\

- For a measurable subset $E\subseteq \mathbb{R}^{n}$, we denote by $\left\vert E\right\vert$ and $\chi_{E}$ the Lebesgue measure of $E$ and the characteristic function of $E$ respectively. \\

- $M$ denotes the Hardy-Littlewood maximal operator given by
\[
Mf(x) = \sup_{Q \ni x} |Q|^{- 1}\int_{Q} |f(y)| \, dy,
\]
where $f$ is a locally integrable function on $\mathbb{R}^{n}$ and the supremum is taken over all the cubes $Q$ containing $x$. \\

- $\Delta^m$ stands for the iterated Laplacian on $\mathbb{R}^n$. \\

- A measurable function $p(\cdot) : \mathbb{R}^n \to (0, \infty)$ is called exponent function or only exponent, and set 
$p_{-}= \inf_{x \in \mathbb{R}^{n}} p(x)$, $p_{+} = \sup_{x \in \mathbb{R}^{n}} p(x)$ and $\underline{p} = \min \{ p_{-}, 1 \}$. \\

- We say that an exponent $p(\cdot)$ is locally log-H\"{o}lder continuous, and denote this by $p(\cdot)\in LH_{0}(\mathbb{R}^{n})$, if there exists a positive constant $C_{0}$ such that
\[
\left\vert p(x)-p(y)\right\vert \leq\frac{C_{0}}{-\log\left\vert
x-y\right\vert }, \,\,\, \left\vert
x-y\right\vert <\frac{1}{2}.
\]

- We say that an exponent $p(\cdot)$ is log-H\"{o}lder continuous at infinity, and denote this by $p(\cdot)\in LH_{\infty}(\mathbb{R}^{n})$, if there exists a positive constant $C_{\infty}$ such that
\[
\left\vert p(x)-p(y)\right\vert \leq\frac{C_{\infty}}{\log\left(  e+\left\vert
x\right\vert \right)  }, \,\,\, \left\vert
y\right\vert \geq \left\vert x \right\vert.
\]

- $\left( L^{p(\cdot)}(\mathbb{R}^n), \| \cdot \|_{L^{p(\cdot)}} \right)$ is the Lebesgue space with variable exponents on $\mathbb{R}^n$, with
\[
\| f \|_{L^{p(\cdot)}} = \inf \left\{ \lambda > 0 : \int_{\mathbb{R}^{n}} \, \left| \frac{f(x)}{\lambda} \right|^{p(x)} \, dx \leq 1 \right\}.
\] 

- $\left( H^{p(\cdot)}(\mathbb{R}^n), \| \cdot \|_{H^{p(\cdot)}} \right)$ is the Hardy space with variable exponents on $\mathbb{R}^n$ (see \cite{Nakai}). \\

- A function $a(\cdot)$ on $\mathbb{R}^{n}$ is called an $(p(\cdot),p_{0},d_{p(\cdot)})$ - atom, if there exists a cube $Q$ such that\newline
$a_{1})$ $\textit{supp}\left( a\right) \subset Q,$\newline
$a_{2})$ $\left\Vert a\right\Vert _{p_{0}}\leq \frac{\left\vert Q\right\vert
^{\frac{1}{p_{0}}}}{\left\Vert \chi _{Q}\right\Vert _{p(\cdot)}}$, $0 < p_{-} \leq  p_{+} < p_{0} \leq \infty $ and $p_{0} \geq 1$, \newline
$a_{3})$ $\int a(x)x^{\alpha }dx=0$ for all $\left\vert \alpha \right\vert
\leq d_{p(\cdot)} := \min \left\{ l\in \mathbb{N\cup }\left\{ 0\right\} : p_{-}(n+l+1)>n\right\}.$ \\

- For sequences of nonnegative numbers $\left\{ k_{j}\right\} _{j=1}^{\infty }$
and cubes $\left\{ Q_{j}\right\} _{j=1}^{\infty }$ and for an exponent $p(\cdot):
\mathbb{R}^{n}\rightarrow \left( 0,\infty \right) $, we define
\[
\mathcal{A}\left( \left\{ k_{j}\right\}_{j=1}^{\infty },\left\{Q_{j}\right\}_{j=1}^{\infty }, p(\cdot)\right) =
\left\Vert \left\{\sum\limits_{j=1}^{\infty }\left( \frac{k_{j}\chi _{Q_{j}}}{\left\Vert \chi_{Q_{j}}\right\Vert_{p(\cdot)}} 
\right)^{\underline{p}}\right\}^{\frac{1}{\underline{p}}}\right\Vert_{p(\cdot)}.
\]

- Every $f \in H^{p(\cdot)}(\mathbb{R}^n)$ admits an atomic decomposition $f=\sum\limits_{j=1}^{\infty }k_{j}a_{j}$ (see \cite{Nakai}), where 
$\left\{ k_{j}\right\}_{j=1}^{\infty }$ is a sequence of non negative numbers, the $a_{j}$'s are $(p(\cdot),p_{0},d)$ - atoms and 
\[
\mathcal{A}\left( \left\{ k_{j}\right\}_{j=1}^{\infty },\left\{ Q_{j}\right\} _{j=1}^{\infty }, p(\cdot)\right) \lesssim 
\|f \|_{H^{p(\cdot)}}.
\]

- $\left( \mathcal{H}^{p(\cdot)}_{q, 2m}(\mathbb{R}^{n}), \| \cdot \|_{\mathcal{H}^{p(\cdot)}_{q, 2m}} \right)$ is the Calder\'on-Hardy space with variable exponents on $\mathbb{R}^n$ (see \cite{Rocha}).

\

In \cite{Rocha}, we proved the following result.

\begin{theorem} Let $p(\cdot)$ be an exponent that belongs to $LH_{0}(\mathbb{R}^{n})\cap LH_{\infty}(\mathbb{R}^{n})$, 
$1 < q < \infty$ and $m \in \mathbb{N}$ such that $0 < p_{-} \leq p_{+} < \infty$ and $n (2m + n/q)^{-1} < \underline{p}$. Then 
for $q$ sufficiently large the operator $\Delta^{m}$ is a bijective mapping from $\mathcal{H}^{p(\cdot)}_{q, 2m}(\mathbb{R}^{n})$ onto 
$H^{p(\cdot)}(\mathbb{R}^{n})$. Moreover, there exist two positive constant $c_1$ and $c_2$ such that
\[
c_1 \|F \|_{\mathcal{H}^{p(\cdot)}_{q, 2m}} \leq \| \Delta^{m}F \|_{H^{p(\cdot)}} \leq c_2 \|F \|_{\mathcal{H}^{p(\cdot)}_{q, 2m}}
\]
hold for all $F \in \mathcal{H}^{p(\cdot)}_{q, 2m}(\mathbb{R}^{n})$.
\end{theorem}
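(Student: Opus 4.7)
The plan is to prove the theorem in three parts: injectivity of $\Delta^m$ on the quotient space $\mathcal{H}^{p(\cdot)}_{q,2m}(\mathbb{R}^n)$, the upper inequality $\|\Delta^m F\|_{H^{p(\cdot)}}\leq c_2\|F\|_{\mathcal{H}^{p(\cdot)}_{q,2m}}$, and the reverse inequality together with surjectivity. Injectivity is formal: elements of $\mathcal{H}^{p(\cdot)}_{q,2m}$ are classes modulo polynomials of degree less than $2m$, and the growth restriction encoded in the Calder\'on-Hardy quasi-norm forces any $F$ with $\Delta^m F=0$ to be a polyharmonic polynomial of controlled degree, hence the zero class. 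The upper bound follows by dominating the grand maximal function of $\Delta^m F$ pointwise by a multiple of the Calder\'on function $N_{q,2m}F$ (the $L^\infty$-norm on a ball is controlled by the $L^q$-mean polynomial approximations that define $N_{q,2m}$) and then taking $L^{p(\cdot)}$-norms; the $LH_0\cap LH_\infty$ regularity of $p(\cdot)$ is needed only to guarantee the grand maximal function characterization of $H^{p(\cdot)}$.

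The main work is the lower inequality and surjectivity. Given $f\in H^{p(\cdot)}$ with atomic decomposition $f=\sum_j k_j a_j$ by $(p(\cdot),p_0,d_{p(\cdot)})$-atoms supported on cubes $Q_j=Q(x_j,r_j)$, I construct for each atom the Newton primitive $F_j(x):=\int \Gamma_m(x-y)\,a_j(y)\,dy$, where $\Gamma_m$ is the fundamental solution of $\Delta^m$, so that $\Delta^m F_j=a_j$. Using the vanishing moment condition $a_3)$ and Taylor expanding $\Gamma_m(\cdot-y)$ around $x_j$ up to order $d_{p(\cdot)}$, performed \emph{inside} the $L^q$-mean that enters $N_{q,2m}$, one derives the off-diagonal estimate
\[
\Bigl(r^{-n}\int_{Q(x,r)}|F_j(z)|^q\,dz\Bigr)^{1/q}\lesssim \frac{r^{2m+n/q}\,r_j^{d_{p(\cdot)}+1}}{|x-x_j|^{n-2m+d_{p(\cdot)}+1}}\,\|a_j\|_{p_0}\,|Q_j|^{1-1/p_0},
\]
valid for $x\notin 2Q_j$ and $0<r\leq |x-x_j|/4$, together with analogous bounds on the polynomial remainders appearing in $N_{q,2m}$. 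The crucial factor $r^{n/q}$ comes from the $L^q$-average and is exactly what allows $q$ to enter the final hypothesis in its natural dimensional form.

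Setting $F:=\sum_j k_j F_j$ modulo polynomials of degree less than $2m$, these estimates yield the pointwise majorization
\[
N_{q,2m}F(x)\lesssim \sum_j \frac{k_j}{\|\chi_{Q_j}\|_{p(\cdot)}}\min\Bigl\{1,\Bigl(\tfrac{r_j}{|x-x_j|}\Bigr)^{2m+n/q+d_{p(\cdot)}+1}\Bigr\}\lesssim \sum_j \frac{k_j}{\|\chi_{Q_j}\|_{p(\cdot)}}[M\chi_{Q_j}(x)]^{(2m+n/q+d_{p(\cdot)}+1)/n}.
\]
Raising to the power $\underline{p}$ and applying the Fefferman-Stein vector-valued maximal inequality on $L^{p(\cdot)/\underline{p}}$—valid under the $LH$ hypotheses whenever $(2m+n/q+d_{p(\cdot)}+1)/n>1/\underline{p}$, that is, precisely when $n(2m+n/q)^{-1}<\underline{p}$ (with additional slack provided by $p_-(n+d_{p(\cdot)}+1)>n$)—one obtains $\|N_{q,2m}F\|_{L^{p(\cdot)}}\lesssim \mathcal{A}(\{k_j\},\{Q_j\},p(\cdot))\lesssim \|f\|_{H^{p(\cdot)}}$, which gives both surjectivity and the lower inequality.

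The main obstacle is the off-diagonal $L^q$-mean estimate on $F_j$ displayed above: the Taylor expansion of $\Gamma_m$ must be performed inside the $L^q$-mean rather than after a pointwise bound on $F_j$, so that the $r^{n/q}$ factor is genuinely gained; this requires careful H\"older control of the remainder using the finite exponent $p_0$ of the atoms. A secondary technical nuisance is the verification that $\sum_j k_j F_j$ converges in the quotient $\mathcal{H}^{p(\cdot)}_{q,2m}$ and that $\Delta^m$ of the sum equals $f$ distributionally; the ``$q$ sufficiently large'' clause in the present statement seems to reflect the extra slack needed in these routine H\"older manipulations for $p_0$ and $d_{p(\cdot)}$ to be fixed compatibly, which is exactly the point that the present note proposes to remove.
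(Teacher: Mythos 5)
Your overall architecture — injectivity via the quotient modulo polynomials of degree less than $2m$, the upper bound by dominating a maximal function of $\Delta^m F$ pointwise by $N_{q,2m}F$, and the lower bound/surjectivity via the atomic decomposition, Newton potentials $F_j=\Gamma_m*a_j$, and a Taylor expansion of $\Gamma_m$ against the moment condition $a_3)$ for $x$ away from $Q_j$ — matches the proof in \cite{Rocha}. The genuine gap is in the local part of your key pointwise majorization. Writing
\[
N_{q,2m}F(x)\lesssim \sum_j \frac{k_j}{\|\chi_{Q_j}\|_{p(\cdot)}}\min\Bigl\{1,\bigl(r_j/|x-x_j|\bigr)^{2m+n/q+d_{p(\cdot)}+1}\Bigr\}
\]
asserts in particular that $N_{q,2m}(F_j;x)\lesssim \|\chi_{Q_j}\|_{p(\cdot)}^{-1}$ for $x$ in a dilate of $Q_j$, and this is not justified: for $(p(\cdot),p_0,d_{p(\cdot)})$-atoms with $p_0<\infty$ the atom has no pointwise bound, and near the supporting cube the actual estimate produces three \emph{local} terms, namely $\chi_{4\sqrt{n}Q_j}M(a_j)$, $\chi_{4\sqrt{n}Q_j}[M(M^{q}(a_j))]^{1/q}$ (this one generated by the $L^q$-mean inside the definition of $N_{q,2m}$), and $\chi_{4\sqrt{n}Q_j}\sum_{|\alpha|=2m}T^{*}_{\alpha}(a_j)$ (maximal Calder\'on--Zygmund operators arising from the $2m$-th derivatives of the Newton potential). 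None of these is pointwise $\lesssim\|\chi_{Q_j}\|_{p(\cdot)}^{-1}$ — the singular-integral term is not even controlled that way for $L^\infty$-atoms — so they must be kept as separate summands (the terms $II$, $III$, $IV$ displayed in the note) and summed through their $L^{p_0}(4\sqrt{n}Q_j)$ norms by means of a lemma of Nakai--Sawano or Ho type.

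This omission also causes you to misdiagnose the role of the hypothesis you were asked to prove. The clause ``for $q$ sufficiently large'' does not come from routine H\"older slack in the convergence of $\sum_j k_jF_j$, as your last paragraph suggests; it comes entirely from the summation of the single local term $III=\sum_j k_j\chi_{4\sqrt{n}Q_j}[M(M^{q}(a_j))]^{1/q}$, where the original argument invokes Lemma 4.11 of \cite{Nakai} with $\delta=1/q$ and therefore needs $1/q$ small. Replacing that step by Proposition \ref{b_k functions} and Lemma \ref{ineq p star} is precisely the content of this note. Two smaller points: your equivalence ``$(2m+n/q+d_{p(\cdot)}+1)/n>1/\underline{p}$, that is, precisely when $n(2m+n/q)^{-1}<\underline{p}$'' is only an implication in one direction (the stated hypothesis is the stronger one, and in fact the proof only achieves the exponent $(2m+n/q-\mu)/n$ for a small $\mu>0$, which is why the strict inequality $n(2m+n/q)^{-1}<\underline{p}$ is what is assumed); and the convergence of $\sum_j k_jF_j$ in $\mathcal{H}^{p(\cdot)}_{q,2m}$ together with $\Delta^m(\sum_j k_jF_j)=f$ does require an argument, but it is not where the restriction on $q$ lives.
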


In others words, Theorem 1 says that the equation
\[
\Delta^m F = f, \,\,\,\,\,\, \text{for} \,\, f \in H^{p(\cdot)}(\mathbb{R}^n),
\]
has a unique solution $F$ in $\mathcal{H}^{p(\cdot)}_{q, 2m}(\mathbb{R}^{n})$, for $1 < q < \infty$ ({\bf sufficiently large}) and 
$m \in \mathbb{N}$ such that $n (2m + n/q)^{-1} < \underline{p}$.

\

If $n$ is large, and since the parameter $q$ is sufficiently large in Theorem 1, then one can be forced to take $m$ also large to satisfy the condition $n (2m + n/q)^{-1} < \underline{p} = \min \{ p_{-}, 1 \}$. For instance, when $n$ is large and $n/q$ is small. This restricts the possible values of $m$ once fixed $n$ large. In this note, we will remove the condition that $q$ be sufficiently large.

\

Next, we give the main steps to improve the parameter $q$ in Theorem 1.

\

Following the proof of Theorem 1 in \cite[p. 1026]{Rocha}, we compute the $\| \cdot \|_{L^{p(\cdot)}}$ - norm of the following pointwise inequality
\begin{eqnarray*}
\sum_{j=1}^{\infty} k_j N_{q, 2m}(B_j; x) &\lesssim& \sum_{j=1}^{\infty} k_j \frac{\left[M(\chi_{Q_j})(x) \right]^{\frac{2m + n/q - \mu}{n}}}{\| \chi_{Q_j} \|_{p(\cdot)}} + \sum_{j=1}^{\infty} k_j \chi_{4\sqrt{n}Q_j}(x) M(a_j)(x) \\
&+& \sum_{j=1}^{\infty} k_j \chi_{4 \sqrt{n} Q_j}(x) [M(M^{q}(a_j))(x)]^{1/q} \\
&+& \sum_{j=1}^{\infty} k_j \chi_{4 \sqrt{n} Q_j}(x)  \sum_{|\alpha|=2m} T^{*}_{\alpha}(a_j) (x) \\
&=& I + II + III + IV,
\end{eqnarray*}
where the $a_j$'s are $(p(\cdot),p_{0},d_{p(\cdot)})$ - atoms corresponding to an atomic decomposition of an arbitrary element 
$f \in H^{p(\cdot)}(\mathbb{R}^n)$, and each $a_j$ is supported on $Q_j$. 

We observe that only need to improve the estimation of $III$, where
\[
III = \sum_{j=1}^{\infty} k_j \chi_{4 \sqrt{n} Q_j}(x) [M(M^{q}(a_j))(x)]^{1/q}.
\] 
Indeed, in \cite{Rocha} to estimate $III$ we take $q > 1$ sufficiently large such that 
$\delta = \frac{1}{q}$ satisfies the hypothesis of Lemma 4.11 in \cite{Nakai}. This is, $q$ must be such that 
$\frac{1}{q} \in (0, -\frac{\log_{2}(\beta)}{n+1})$, where $\beta$ is an unspecified constant of $(0,1)$. In the estimates of $I$, $II$ and 
$IV$ it is not required $q$ large.

\

To improve the estimate of $III$, we will need two supporting results, the first is a version of \cite[Lemma 5.4]{Ho} obtained by K.-P. Ho, and the second one refers to the amount $\mathcal{A}\left( \left\{ k_{j}\right\}_{j=1}^{\infty },\left\{Q_{j}\right\}_{j=1}^{\infty }, 
p(\cdot)\right)$.

\begin{proposition} \label{b_k functions}
Let $p(\cdot) : \mathbb{R}^{n} \to (0, \infty)$ such that $p(\cdot) \in LH_{0} \cap LH_{\infty}(\mathbb{R}^{n})$ and 
$0 < p_{-} \leq p_{+} < \infty$. Let $s > 1$ and $0 < p_{*} < \underline{p}$ such that $s p_{*} > p_{+}$ and let 
$\{ b_j \}_{j=1}^{\infty}$ be a sequence of nonnegative functions in $L^{s}(\mathbb{R}^{n})$ such that each $b_j$ is supported 
in a cube $Q_j \subset \mathbb{R}^{n}$ and
\begin{equation} \label{bks}
\| b_j \|_{L^{s}(\mathbb{R}^{n})} \leq A_j |Q_j|^{1/s},
\end{equation}
where $A_j >0$ for all $j \geq 1$. Then, for any sequence of nonnegative numbers $\{ k_j \}_{j=1}^{\infty}$ we have
\[
\left\| \sum_{j=1}^{\infty} k_j b_j \right\|_{L^{p(\cdot)/p_{*}}(\mathbb{R}^{n})} \leq C \left\| \sum_{j=1}^{\infty} A_j k_j 
\chi_{Q_j} \right\|_{L^{p(\cdot)/p_{*}}(\mathbb{R}^{n})},
\]
where $C$ is a positive constant which does not depend on $\{ b_j \}_{j=1}^{\infty}$, $\{ A_j \}_{j=1}^{\infty}$, and 
$\{ k_j \}_{j=1}^{\infty}$.
\end{proposition}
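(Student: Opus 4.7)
My plan is to reduce the inequality to a duality pairing and then exploit the variable-exponent boundedness of the Hardy--Littlewood maximal operator $M$. Since $p_* < \underline{p} \leq p_-$, the exponent $r(\cdot) := p(\cdot)/p_*$ satisfies $r_- > 1$; hence $L^{r(\cdot)}(\mathbb{R}^n)$ is a Banach function space whose norm is comparable, up to an absolute constant, to $\sup \int f g \, dx$ taken over nonnegative $g$ with $\|g\|_{L^{r'(\cdot)}} \leq 1$, where $r'(\cdot) := p(\cdot)/(p(\cdot)-p_*)$. It therefore suffices to bound this pairing for arbitrary such $g$ by the right-hand side of the claim.

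The key pointwise step converts the $L^s$--size hypothesis on each $b_j$ into a majorant involving $M$. Indeed, H\"older's inequality on $Q_j$ with exponents $s,s'$, combined with \eqref{bks}, gives
\[
\int b_j g \, dx \leq \|b_j\|_{L^s} \Bigl(\int_{Q_j} g^{s'}\Bigr)^{1/s'} \leq A_j |Q_j| \Bigl(\frac{1}{|Q_j|}\int_{Q_j} g^{s'}\Bigr)^{1/s'} \leq A_j \int_{Q_j} \bigl(M(g^{s'})(y)\bigr)^{1/s'} dy.
\]
Multiplying by $k_j$, summing, and swapping sum and integral leads to
\[
\int \Bigl(\sum_j k_j b_j\Bigr) g \, dx \leq \int \Bigl(\sum_j A_j k_j \chi_{Q_j}(y)\Bigr) \bigl(M(g^{s'})(y)\bigr)^{1/s'} dy,
\]
and then variable-exponent H\"older with the couple $(L^{r(\cdot)}, L^{r'(\cdot)})$ bounds this by $\bigl\|\sum_j A_j k_j \chi_{Q_j}\bigr\|_{L^{r(\cdot)}} \cdot \bigl\|(M(g^{s'}))^{1/s'}\bigr\|_{L^{r'(\cdot)}}$.

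It remains to absorb the second factor into $\|g\|_{L^{r'(\cdot)}} \leq 1$. Using $\|h^{1/s'}\|_{L^{r'(\cdot)}} = \|h\|_{L^{r'(\cdot)/s'}}^{1/s'}$, the task reduces to the boundedness of $M$ on $L^{r'(\cdot)/s'}$. A short algebraic computation gives $r'(x)/s' = p(x)/(s'(p(x)-p_*))$, so $r'(x)/s' > 1$ is equivalent to $p(x) < sp_*$, and by the hypothesis $sp_* > p_+$ this holds uniformly, yielding $(r'(\cdot)/s')_- > 1$. Moreover $r'(\cdot)/s'$ inherits the $LH_0 \cap LH_\infty$ regularity from $p(\cdot)$, so $M$ is bounded on $L^{r'(\cdot)/s'}$, and hence $\|M(g^{s'})\|_{L^{r'(\cdot)/s'}}^{1/s'} \lesssim \|g^{s'}\|_{L^{r'(\cdot)/s'}}^{1/s'} = \|g\|_{L^{r'(\cdot)}}$. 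Taking the supremum over admissible $g$ closes the estimate.

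The main obstacle is the careful bookkeeping of three distinct exponents $r(\cdot)$, $r'(\cdot)$, and $r'(\cdot)/s'$: one must verify that $p_* < \underline{p}$ is exactly what makes the duality step work (giving $r_- > 1$), while $sp_* > p_+$ is exactly what makes the maximal-function step work (giving $(r'(\cdot)/s')_- > 1$), and that $s > 1$ is needed both for $s'$ to be finite and for log-H\"older regularity to be preserved under these operations.
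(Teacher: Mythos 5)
Your argument is correct, and it is essentially the same proof as the one the paper delegates to \cite{Rocha2} (Proposition 3.3) and ultimately to \cite{Ho} (Lemma 5.4): dualize $L^{p(\cdot)/p_*}$ via the norm conjugate formula, apply H\"older with exponents $s,s'$ on each $Q_j$ to replace $b_j$ by $A_j\chi_{Q_j}$ times $(M(g^{s'}))^{1/s'}$, and absorb the latter using the boundedness of $M$ on $L^{r'(\cdot)/s'}$, which is exactly where $sp_*>p_+$ is used. Your exponent bookkeeping (in particular $(r'(\cdot)/s')_{-}=\tfrac{p_+}{s'(p_+-p_*)}>1$) checks out.
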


\begin{proof}
The proof is similar to the one given in \cite[Proposition 3.3]{Rocha2}.
\end{proof}

\begin{lemma} \label{ineq p star}
Let $p(\cdot) : \mathbb{R}^{n} \to (0, \infty)$ be an exponent with $0 < p_{-} \leq p_{+} < \infty$ and let $\{ Q_j \}$ be a family of cubes which satisfies the bounded intersection property. If $0 < p_{*} < \underline{p}$, then
\[
\left\| \left\{ \sum_j \left( \frac{k_j \chi_{Q_j}}{\| \chi_{Q_j} \|_{L^{p(\cdot)}}} \right)^{p_{*}} \right\}^{1/p_{*}}
\right\|_{L^{p(\cdot)}} \approx \mathcal{A} \left( \{ k_j \}_{j=1}^{\infty}, \{ Q_j \}_{j=1}^{\infty}, p(\cdot) \right)
\]
for any sequence of nonnegative numbers $\{ k_j \}_{j=1}^{\infty}$.
\end{lemma}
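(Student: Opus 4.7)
The plan is to reduce the claimed norm equivalence to a pointwise comparison between the two inner sums, and then conclude by the monotonicity of the $L^{p(\cdot)}$ quasi-norm with respect to pointwise inequalities. The hypothesis on the family of cubes is precisely what makes this pointwise reduction work: for each $x \in \mathbb{R}^{n}$, the bounded intersection property provides an integer $N$ (independent of $x$) such that the set $J(x) := \{j : x \in Q_{j}\}$ has cardinality at most $N$. Thus both the $\ell^{p_{*}}$-sum inside the left-hand side and the $\ell^{\underline{p}}$-sum inside the right-hand side involve, at each fixed $x$, only finitely many nonzero terms, uniformly bounded in number.

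The key elementary step is the standard observation that for any family of nonnegative numbers $\{a_{j}\}_{j \in J(x)}$ with $|J(x)| \leq N$ and any exponent $r > 0$,
\[
\max_{j \in J(x)} a_{j} \;\leq\; \Bigl( \sum_{j \in J(x)} a_{j}^{r} \Bigr)^{1/r} \;\leq\; N^{1/r} \max_{j \in J(x)} a_{j}.
\]
Applying this once with $r = p_{*}$ and once with $r = \underline{p}$ to $a_{j} = k_{j}/\|\chi_{Q_{j}}\|_{L^{p(\cdot)}}$ (noting that $\chi_{Q_{j}}(x) = 1$ exactly for $j \in J(x)$) yields the two-sided pointwise estimate
\[
N^{-1/\underline{p}} \Bigl\{\sum_{j} \bigl( g_{j}(x) \bigr)^{\underline{p}} \Bigr\}^{1/\underline{p}} \;\leq\; \Bigl\{\sum_{j} \bigl( g_{j}(x) \bigr)^{p_{*}} \Bigr\}^{1/p_{*}} \;\leq\; N^{1/p_{*}} \Bigl\{\sum_{j} \bigl( g_{j}(x) \bigr)^{\underline{p}} \Bigr\}^{1/\underline{p}},
\]
where $g_{j}(x) = k_{j}\chi_{Q_{j}}(x)/\|\chi_{Q_{j}}\|_{L^{p(\cdot)}}$.

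Taking $\|\cdot\|_{L^{p(\cdot)}}$ of both sides, and recalling the definition of $\mathcal{A}(\{k_{j}\},\{Q_{j}\},p(\cdot))$, gives exactly the equivalence stated in the lemma, with constants depending only on $N$, $p_{*}$, and $\underline{p}$, but not on the sequence $\{k_{j}\}$. I do not expect a real obstacle here: the argument is essentially a bookkeeping exercise, and the hypothesis $p_{*} < \underline{p}$ is not even used for this pointwise comparison (it enters elsewhere in the application of Proposition~\ref{b_k functions}). The only detail to verify carefully is that the elementary inequality above is applied uniformly in $x$, which is automatic once the uniform bound $|J(x)| \leq N$ is in place.
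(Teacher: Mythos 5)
Your proof is correct and is essentially the argument the paper relies on: the paper only defers to \cite[Lemma 5.7]{Rocha2}, which is proved by exactly this device of comparing the $\ell^{p_{*}}$- and $\ell^{\underline{p}}$-sums pointwise over the at most $N$ cubes containing each point and then invoking the monotonicity and homogeneity of the $L^{p(\cdot)}$ quasi-norm. (One could even get the bound $\bigl\{\sum_j g_j^{\underline{p}}\bigr\}^{1/\underline{p}} \leq \bigl\{\sum_j g_j^{p_{*}}\bigr\}^{1/p_{*}}$ with constant $1$ from $p_{*} < \underline{p}$ alone, so the bounded intersection property is only genuinely needed for the reverse inequality, but your symmetric treatment via the maximum is perfectly valid.)
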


\begin{proof}
The proof is similar to the one given in \cite[Lemma 5.7]{Rocha2}.
\end{proof}

We are now in a position to give a new estimate of $III$.

\

\underline{\textit{New estimate of $III$}}: Given $1 < q < \infty$ and an exponent $p(\cdot) \in LH_0 \cap L_{\infty}(\mathbb{R}^n)$, let 
$0 < p_{*} < \underline{p}$ be fixed, $p_{0} > \max \{ p_{+}, 2q \}$, and let $a(\cdot)$ be an $(p(\cdot),p_{0},d_{p(\cdot)})$ - atom. So,
\begin{eqnarray*}
\left\| [M(M^{q}(a_j))]^{p_{*}/q} \right\|_{L^{p_{0}/p_{*}}(4\sqrt{n}Q_{j})} &=& 
\left\| [M(M^{q}(a_j))]^{1/q} \right\|_{L^{p_{0}}(4\sqrt{n}Q_{j}))}^{p_{*}} \\
&\lesssim& \left\| M(a_j) \right\|_{L^{p_{0}}(\mathbb{R}^n)}^{p_{*}} \\
&\lesssim& \left\| a_j \right\|_{L^{p_{0}}(\mathbb{R}^n)}^{p_{*}} \\
&\lesssim& \frac{| Q_j |^{\frac{p_{*}}{p_{0}}}}{\left\| \chi _{Q_j }\right\|_{L^{p(\cdot)}}^{p_{*}}} \\
&\lesssim& 
\frac{\left| 4\sqrt{n}Q_{j} \right|^{\frac{p_{*}}{p_{0}}}}{\left\| \chi_{4\sqrt{n}Q_{j}} \right\|_{L^{p(\cdot)/p_{*}}}},
\end{eqnarray*}
where the last inequality follows from Lemma 2.2 in \cite{Nakai}. Now, since $0 < p_{*} < 1$, we apply the $p_{\ast}$-inequality and Proposition \ref{b_k functions} with $b_j = \left( \chi_{4\sqrt{n}Q_{j}} \cdot [M(M^{q}(a_j))]^{p_{*}/q} \right)$, 
$A_j = \left\| \chi_{4\sqrt{n}Q_{j}} \right\|_{L^{p(\cdot)/p_{*}}}^{-1}$ and $s= p_0/p_{*}$, to obtain
\begin{eqnarray*}
\| III \|_{L^{p(\cdot)}} &\lesssim& \left\| \sum_{j} \left(k_j \, \chi_{4\sqrt{n}Q_{j}} \, [M(M^{q}(a_j))] \right)^{p_{*}/q} 
\right\|^{1/p_{*}}_{L^{p(\cdot)/p_{*}}} \\
&\lesssim& \left\| \sum_{j} \left( \frac{k_j}{\left\| \chi_{4\sqrt{n}Q_{j}} \right\|_{L^{p(\cdot)}}} \right)^{p_{*}} 
\chi_{4\sqrt{n}Q_{j}} \right\|^{1/p_{*}}_{L^{p(\cdot)/p_{*}}}.
\end{eqnarray*}
It is easy to check that $\chi_{4\sqrt{n}Q_{j}} \leq [M(\chi_{Q_j})]^{2}$. From this inequality, Lemma 2.2 in \cite{Nakai} 
and Lemma 2.4 in \cite{Nakai}, we have
\begin{eqnarray*}
\| III \|_{L^{p(\cdot)}} &\lesssim& \left\| \left\{ \sum_{j} \left( \frac{k_j^{p_{*}/2}}{\left\| \chi_{Q_{j}} 
\right\|^{p_{*}/2}_{L^{p(\cdot)}}} M(\chi_{Q_j})\right)^{2}  \right\}^{1/2} \right\|^{2/p_{*}}_{L^{2p(\cdot)/p_{*}}} \\
&\lesssim& \left\| \left\{ \sum_{j} \left( \frac{ k_j \chi_{Q_j}}{\| \chi_{Q_j} \|_{L^{p(\cdot)}}} \right)^{p_{*}} 
\right\}^{1/p_{*}} \right\|_{L^{p(\cdot)}}.
\end{eqnarray*}
Finally, Lemma \ref{ineq p star} gives
\[
\| III \|_{L^{p(\cdot)}} \lesssim \mathcal{A}\left( \{ k_j \}_{j=1}^{\infty}, \{ Q_j \}_{j=1}^{\infty}, p(\cdot) \right) 
\lesssim \| f \|_{H^{p(\cdot)}}.
\]

\

Thus, now we have the following improved version of Theorem 1.

\begin{theorem} Let $p(\cdot)$ be an exponent that belongs to $LH_{0}(\mathbb{R}^{n})\cap LH_{\infty}(\mathbb{R}^{n})$, 
$1 < q < \infty$ and $m \in \mathbb{N}$ such that $0 < p_{-} \leq p_{+} < \infty$ and $n (2m + n/q)^{-1} < \underline{p}$. Then 
the operator $\Delta^{m}$ is a bijective mapping from $\mathcal{H}^{p(\cdot)}_{q, 2m}(\mathbb{R}^{n})$ onto 
$H^{p(\cdot)}(\mathbb{R}^{n})$. Moreover, there exist two positive constant $c_1$ and $c_2$ such that
\[
c_1 \|F \|_{\mathcal{H}^{p(\cdot)}_{q, 2m}} \leq \| \Delta^{m}F \|_{H^{p(\cdot)}} \leq c_2 \|F \|_{\mathcal{H}^{p(\cdot)}_{q, 2m}}
\]
hold for all $F \in \mathcal{H}^{p(\cdot)}_{q, 2m}(\mathbb{R}^{n})$.
\end{theorem}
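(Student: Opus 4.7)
The plan is to reproduce the proof of Theorem 1 from \cite{Rocha} essentially verbatim, replacing only the step where the term $III$ is estimated. The overall scheme in \cite{Rocha} is: fix $F \in \mathcal{H}^{p(\cdot)}_{q,2m}(\mathbb{R}^{n})$, write $f = \Delta^{m} F \in H^{p(\cdot)}(\mathbb{R}^{n})$ with its atomic decomposition $f = \sum_{j} k_{j} a_{j}$, construct for each atom $a_{j}$ an associated building block $B_{j}$ of $\mathcal{H}^{p(\cdot)}_{q,2m}$ via a Newtonian-type potential, and control the quantity $\sum_{j} k_{j} N_{q,2m}(B_{j};\cdot)$ by the four-term pointwise inequality $I+II+III+IV$ recalled in the excerpt. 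The hypothesis $n(2m+n/q)^{-1} < \underline{p}$ enters in the bound for $I$, while $II$ and $IV$ are handled through the boundedness of $M$ and of the singular integrals $T^{*}_{\alpha}$ on suitable variable Lebesgue spaces; none of these three steps uses the condition ``$q$ sufficiently large.'' The left inequality $c_{1}\|F\|_{\mathcal{H}^{p(\cdot)}_{q,2m}} \leq \|\Delta^{m}F\|_{H^{p(\cdot)}}$, together with injectivity and surjectivity of $\Delta^{m}$, is proved in \cite{Rocha} independently of any restriction on $q$, so it transfers without modification.

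The only step to change is the estimate of $III = \sum_{j} k_{j}\chi_{4\sqrt{n}Q_{j}}[M(M^{q}(a_{j}))]^{1/q}$. I would use precisely the argument just displayed in the paper: choose an auxiliary $p_{*}$ with $0 < p_{*} < \underline{p}$ and an atom-parameter $p_{0} > \max\{p_{+}, 2q\}$ (the definition of atom only demands $p_{0} > p_{+}$, so this is allowed), apply the $L^{p_{0}}$-boundedness of $M$ twice, and use the size estimate $\|a_{j}\|_{p_{0}} \leq |Q_{j}|^{1/p_{0}} / \|\chi_{Q_{j}}\|_{p(\cdot)}$ to bound $\|\chi_{4\sqrt{n}Q_{j}}[M(M^{q}(a_{j}))]^{p_{*}/q}\|_{L^{p_{0}/p_{*}}}$ by $|4\sqrt{n}Q_{j}|^{p_{*}/p_{0}}/\|\chi_{4\sqrt{n}Q_{j}}\|_{L^{p(\cdot)/p_{*}}}$. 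The choice $p_{0} > 2q$ guarantees $sp_{*} = p_{0} > p_{+}$, so Proposition \ref{b_k functions} applies with $s = p_{0}/p_{*}$; combining this with the $p_{*}$-inequality, the pointwise bound $\chi_{4\sqrt{n}Q_{j}} \leq [M(\chi_{Q_{j}})]^{2}$, the Fefferman--Stein vector-valued inequality on $L^{2p(\cdot)/p_{*}}(\ell^{2})$ (which holds because $2p_{-}/p_{*} > 2 > 1$), and finally Lemma \ref{ineq p star}, we land on $\|III\|_{L^{p(\cdot)}} \lesssim \mathcal{A}(\{k_{j}\},\{Q_{j}\},p(\cdot)) \lesssim \|f\|_{H^{p(\cdot)}}$.

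Assembling the four bounds $\|I\|_{L^{p(\cdot)}}$, $\|II\|_{L^{p(\cdot)}}$, $\|III\|_{L^{p(\cdot)}}$, $\|IV\|_{L^{p(\cdot)}} \lesssim \|f\|_{H^{p(\cdot)}}$ gives $\|\Delta^{m}F\|_{H^{p(\cdot)}} \leq c_{2}\|F\|_{\mathcal{H}^{p(\cdot)}_{q,2m}}$, and pairing with the already-known lower bound and bijectivity completes the proof. The main obstacle — removing the smallness assumption $1/q < -\log_{2}(\beta)/(n+1)$ used in \cite{Rocha} to invoke Lemma 4.11 of \cite{Nakai} — is bypassed precisely by working at exponent $p_{0} > 2q$ rather than applying the $\delta$-variant of the maximal inequality with $\delta = 1/q$; this allows the two applications of $M$ inside $III$ to be absorbed by the $L^{p_{0}/p_{*}}$-boundedness of $M$ without any further restriction on $q$. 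No other part of the argument in \cite{Rocha} is affected, so the theorem follows.
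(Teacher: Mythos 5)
Your proposal is correct and follows essentially the same route as the paper: keep the original argument of \cite{Rocha} for $I$, $II$, $IV$ and for the lower bound and bijectivity, and redo only the estimate of $III$ by choosing $p_{0} > \max\{p_{+}, 2q\}$, using the $L^{p_{0}}$-boundedness of $M$, Proposition \ref{b_k functions}, the pointwise bound $\chi_{4\sqrt{n}Q_{j}} \leq [M(\chi_{Q_{j}})]^{2}$, the vector-valued maximal inequality on $L^{2p(\cdot)/p_{*}}(\ell^{2})$, and Lemma \ref{ineq p star}. This matches the paper's new estimate of $III$ step for step, including the key observation that enlarging $p_{0}$ (rather than invoking the $\delta$-variant of the maximal inequality with $\delta = 1/q$) is what removes the restriction that $q$ be large.
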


\

Pablo Rocha, Instituto de Matem\'atica (INMABB), Departamento de Matem\'atica, Universidad Nacional del Sur (UNS)-CONICET, Bah\'ia Blanca, Argentina. \\
{\it e-mail:} pablo.rocha@uns.edu.ar


\begin{thebibliography}{9}

\bibitem{Ho} {\sc K.-P. Ho}, \textit{Atomic decompositions of weighted Hardy spaces with variable exponents}, Tohoku Math. J. 69 (3) 
(2017), 383-413.

\bibitem{Nakai} {\sc E. Nakai and Y. Sawano}, \textit{Hardy spaces with variable exponents and generalized
Campanato spaces}, Journal of Functional Analysis, 262, 3665-3748, (2012).

\bibitem{Rocha} {\sc P. Rocha}, \textit{Calder\'on-Hardy spaces with variable exponents and the solution of the equation $\Delta^{m}F=f$ for
$f \in H^{p(\cdot)}(\mathbb{R}^{n})$}, Math. Ineq. \& appl., Vol 19 (3) (2016), 1013-1030.

\bibitem{Rocha2} {\sc P. Rocha}, \textit{Convolution operators and variable Hardy spaces on the Heisenberg group}, Acta Math. Hung. 174 (2) 
(2024), 429-452.

\end{thebibliography}
\end{document}